\documentclass{birkjour}
 \newtheorem{theorem}{Theorem}[section]
 
 \newtheorem{lemma}[theorem]{Lemma}
 \newtheorem{proposition}[theorem]{Proposition}
 \theoremstyle{definition}
 \newtheorem{definition}[theorem]{Definition}
 \theoremstyle{remark}
 \newtheorem{remark}[theorem]{Remark}
 
 \numberwithin{equation}{section}

  \usepackage{cite}
\usepackage{hyperref}
\hypersetup{
	colorlinks,
	linkcolor = {blue},
	citecolor = {blue},
	filecolor = {blue},
	urlcolor = {blue} 
}

\begin{document}

\title[A New Lemoine-Type Construction]
 {A New Lemoine-Type Circle Construction}

%----------Author 1
\author[Miłosz Płatek]{Miłosz Płatek\footnote{Independent researcher,
Kraków, Poland, milosz@platek.org}}

\address{}

\email{}
%----------classification, keywords, date
\subjclass{51M04, 51M05, 51M15}

\keywords{Triangle geometry, Lemoine circle, symmedian point, Brocard axis, Tucker circles}

%%% ----------------------------------------------------------------------

\begin{abstract}
This paper presents a new Lemoine-type construction arising from a configuration of six concyclic points. We prove the concyclicity result, establish a converse theorem, and relate the resulting circle to previously known Lemoine circles, in particular the circle introduced by Q. T. Bui. We also note that the resulting circle does not belong to the family of Tucker circles.
\end{abstract}

%%% ----------------------------------------------------------------------
\maketitle
%%% ----------------------------------------------------------------------
%\tableofcontents
\section{Historical Background}

In 1873, Émile Lemoine unveiled the symmedian point in his influential work On a Remarkable Point of the Triangle \cite{lem}. Lemoine’s concise yet profound study is often recognized as the catalyst for what became known as modern triangle geometry, introducing key concepts that guided generations of synthetic investigations.

Over the next decades, mathematicians pursued an array of discoveries in triangle centers, loci, and constructs, culminating in the extensive compilation found in Klein’s Encyclopedia of Mathematical Sciences (1914) \cite{klein}. However, as the 20th century progressed, the focus shifted toward algebraic and analytic frameworks, causing synthetic triangle geometry to recede from prominence.

Toward the end of the 20th century, renewed interest in triangle geometry emerged, facilitated by advances in dynamic geometry software and the increasing availability of online discussion platforms focused on synthetic methods. One such forum, Hyacinthos, an online discussion group devoted to triangle geometry, fostered an environment for exploration and new discoveries. The group’s name honors Émile-Michel-Hyacinthe Lemoine, whose contributions laid the foundation for the field’s development. Among its many outcomes were the Third Lemoine Circle (Jean-Pierre Ehrmann, 2002)\cite{pierre, ehrmann} and the Q.T. Bui Circle (2006) \cite{bui}, both derived from and inspired by Lemoine’s original ideas. In parallel, Clark Kimberling’s Encyclopedia of Triangle Centers \cite{kimberling} systematized tens of thousands of triangle centers and related objects.

This paper continues Lemoine's legacy. We present a new Lemoine-type construction, provide synthetic proofs of its properties, establish a converse theorem, and compare the resulting circle with known Lemoine circles by examining their arrangement along the Brocard axis \cite{brocardaxis}.

We note that the circle considered here coincides with the Dao-symmedial circle, whose center is $X(5092)$ \cite{kimberling}. Consequently, for a triangle in general position, combining the two constructions yields a new configuration of twelve distinct concyclic points and reveals a connection between two otherwise different geometric constructions. This identification can be verified by a direct but lengthy computation, which we omit.

\section{The Family of Tucker Circles}

A precise understanding of the definition of the Family of Tucker Circles\cite{tucker}\cite{wolf} is essential for the analysis of known Lemoine circles. Throughout this paper, $L$ denotes the Lemoine point of triangle $ABC$, defined as the common point of its three symmedians, and $O$ denotes the circumcenter of triangle $ABC$. Moreover, we say that a line $k$ is antiparallel to a line $l$ with respect to an angle $\angle XYZ$ if the reflection of $k$ across the angle bisector of $\angle XYZ$ is parallel to $l$. In Definition~\ref{tuck}, antiparallels are taken with respect to the angle of triangle $ABC$ opposite the side under consideration. The definition of Tucker circles is as follows.

\begin{definition}[Tucker circle]\label{tuck}
Let $ \triangle ABC $ be a triangle. Let the point $B_a$ lie on the line $AB$, distinct from the points $B$ and $A$. Let:
\begin{itemize}
    \item the line antiparallel to $BC$ through point $B_a$ intersect $AC$ at point $C_a$;
    \item the line parallel to $AB$ through point $C_a$ intersect $BC$ at point $C_b$;
    \item the line antiparallel to $AC$ through point $C_b$ intersect $BA$ at point $A_b$;
    \item the line parallel to $BC$ through point $A_b$ intersect $CA$ at point $A_c$;
    \item the line antiparallel to $AB$ through point $A_c$ intersect $BC$ at point $B_c$.
\end{itemize}
Then the line parallel to $AC$ through point $B_c$ intersects $AB$ at point $B_a$. Thus, the sides of the hexagon $B_aC_aA_cB_cC_bA_b$ are alternately parallel and antiparallel to the sides of triangle $ABC$.  
Such a hexagon is called a Tucker hexagon.
\end{definition}

\begin{proposition}\label{prop}
The vertices of the Tucker hexagon lie on a single circle, called the Tucker circle, whose center lies on the line $OL$.
\end{proposition}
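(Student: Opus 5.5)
The plan is to prove concyclicity in two stages. First I show that the hexagon's vertices lie in pairs on three circles, namely that each antiparallel chord together with a neighbouring parallel chord yields four concyclic points. Then I show these three circles coincide by exhibiting a common centre on $OL$.

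The basic tool is the elementary fact that if $PQ$ is antiparallel to $BC$ with respect to $\angle A$ (with $P\in AB$, $Q\in AC$), then $B,C,Q,P$ are concyclic. Hence $AP\cdot AB = AQ\cdot AC$. Equivalently, the midpoint of an antiparallel to $BC$ lies on the $A$-symmedian, and every antiparallel to $BC$ is parallel to the tangent to the circumcircle at $A$, so it is perpendicular to $OA$.

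\emph{Step 1 (trapezoids are cyclic).} Consider the quadrilateral $B_aC_aC_bA_b$. Its side $C_aC_b$ is parallel to $AB$, so the segment $B_aA_b$ lies on $AB$ and the quadrilateral is a trapezoid with parallel sides $B_aA_b$ and $C_aC_b$. Its leg $B_aC_a$ is antiparallel to $BC$, so $\angle AB_aC_a=\angle C$. Its other leg $A_bC_b$ is antiparallel to $AC$, so $\angle BA_bC_b=\angle C$ as well. A trapezoid whose two legs make equal angles with a base is isosceles, hence cyclic. Doing the same for the other two pairs of consecutive sides produces three circles $\omega_1,\omega_2,\omega_3$, each through four of the six points. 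Any two of them share two points; for instance $\omega_1\ni C_a,C_b$, and $C_a,C_b$ also lie on the circle built from the parallel to $AB$ and the neighbouring antiparallels.

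\emph{Step 2 (the three circles coincide).} The cleanest route is to locate the centres. The three antiparallel chords $B_aC_a$, $C_bA_b$, $A_cB_c$ have midpoints on the respective symmedians. The parallel chords $C_aC_b$, $A_bA_c$, $B_cB_a$, together with the antiparallels, form the standard Tucker picture. I would first show that the antiparallels have equal length and that their midpoints are collinear with $L$ in the right way. More precisely, the hexagon is obtained as follows: take the triangle $A'B'C'$ homothetic to $ABC$ from $L$ with ratio $t$. Its sides, extended, meet the sides of $ABC$ in the six points, and the antiparallels are the diagonals $B_aC_a$ etc. This uses the classical fact that $L$ is the midpoint of the antiparallel through $L$, so the homothety from $L$ preserves the property ``segment between $AB$ and $AC$ bisected by the $A$-symmedian''. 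Then the centre is the midpoint $T$ of $OO'$, where $O'=$ the circumcentre of $A'B'C'$, which lies on $OL$.

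To verify that $T$ is equidistant from all six points, I use that $TB_a=TC_a$ is equivalent to $T$ lying on the perpendicular bisector of $B_aC_a$. That bisector passes through the midpoint $M_a$ of $B_aC_a$ and is parallel to $OA$, since the antiparallel is perpendicular to $OA$. Now $M_a$ is the midpoint of $AA'$ (parallelogram $AB_aA'C_a$), so the bisector is the midline of the trapezoid $OAA'O'$ through the midpoints of $AA'$ and $OO'$, using $O'A'\parallel OA$. Hence it passes through $T$. Together with the cyclic trapezoids of Step 1, which make $TC_a=TC_b$ and similar via the equal chords, all six distances agree.

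The main obstacle is Step 2's identification of the hexagon with the homothety-from-$L$ picture, because the definition starts from an arbitrary $B_a$ and a chain of parallels/antiparallels, and one must also verify the closing claim. I would handle it by choosing $t$ so that $A'B'$ passes through $B_a$, checking that the homothetic picture satisfies each step of the chain, and invoking uniqueness of each constructed point, which also yields the closure asserted in Definition~\ref{tuck}.
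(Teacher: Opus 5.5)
The paper does not prove Proposition~\ref{prop}. It quotes it as the classical Tucker theorem with references, so your argument has to stand on its own. Its architecture is the standard one and is sound:
realise the hexagon via the homothety $h_t$ with centre $L$ and ratio $t$; use the parallelogram $AB_aA'C_a$ to put the midpoint $M_a$ of $B_aC_a$ at $\frac{1}{2}(A+A')$; and take $T=\frac{1}{2}(O+O')$. It gives more than the paper uses, namely $LT=\frac{1+t}{2}\,LO$ (directed). This reproduces the centres stated for the First, Second and Third Lemoine circles and the Bui circle ($t=0,-1,-2,-\frac{1}{2}$). However, the decisive step is asserted rather than proved, and several details need repair.

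\textbf{The gap.} Knowing that $T$ lies on the perpendicular bisectors of the three antiparallels gives only $TB_a=TC_a$, $TC_b=TA_b$, $TA_c=TB_c$. The link $TC_a=TC_b$ is the whole point, and ``via the equal chords'' does not supply it. Spell it out as follows. The symmetry of the isosceles trapezoid $B_aC_aC_bA_b$ gives $B_aC_a=A_bC_b$, and similarly $A_bC_b=A_cB_c$, so the antiparallels have a common length $d$. Next, $M_a-T=\frac{1+t}{2}(A-O)$ gives $TM_a=TM_b=TM_c=\frac{|1+t|}{2}R$. Hence $TX^2=\frac{1}{4}\bigl((1+t)^2R^2+d^2\bigr)$ for all six vertices $X$. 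The alternative argument, that each trapezoid circle is centred on two perpendicular bisectors through $T$, breaks for one circle in a right triangle, where two of $OA,OB,OC$ are parallel.

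\textbf{Smaller repairs.}
\begin{enumerate}
\item Choose $t$ so that $h_t(CA)$, i.e.\ $C'A'$, passes through $B_a$, because $B_cB_a\parallel AC$. The line $A'B'=h_t(AB)$ meets $AB$ only when $t=1$.
\item Phrase everything with the lines $h_t(BC),h_t(CA),h_t(AB)$ rather than the triangle $A'B'C'$. For $t=0$ (the First Lemoine circle) that triangle collapses to $L$.
\item The antiparallelism of $B_aC_a$ in the homothetic picture does not come from ``$L$ bisects the antiparallel through $L$''. It comes from $M_a\in AL$ together with uniqueness: for fixed $B_a$, the midpoints of $B_aQ$ with $Q\in AC$ run along a line parallel to $AC$, which meets the symmedian exactly once. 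The same argument matches every step of the chain and yields the closure.
\item In Step~1, the angles $\angle AB_aC_a$ and $\angle BA_bC_b$ are measured from opposite rays of $AB$. To cover every configuration, state the condition for directed angles between lines as $\angle(AB,B_aC_a)=-\angle(AB,A_bC_b)$.
\item Your three circles share the endpoints of the antiparallels, not $C_a,C_b$ as you wrote.
\end{enumerate}
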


\noindent All currently known Lemoine circles belong to the family of Tucker circles. This significantly simplifies their proofs---it suffices to show that consecutive sides are alternately parallel and antiparallel to the corresponding sides of the triangle, by virtue of Claim \ref{prop}. 

\section{Known Lemoine Circles}

In this section, we focus on circles associated with the point \emph{$L$}. Émile Lemoine conducted extensive research on this point in 1873, defining what are now known as the \emph{First} and \emph{Second Lemoine Circles} \cite{history, ehrmann}.

\begin{proposition}
(First Lemoine Circle) Let the lines parallel to the lines $BC$, $BC$, $CA$, $CA$, $AB$, $AB$ through the point $L$ intersect the lines $CA$, $AB$; $AB$, $BC$, $BC$, $CA$ at six points. These six points lie on a single circle, called the \emph{First Lemoine Circle} of triangle $ABC$, and its center is the midpoint of the segment $LO$.
\end{proposition}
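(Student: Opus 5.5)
The plan is to show that the six points form a Tucker hexagon. Proposition~\ref{prop} then gives concyclicity and places the center on $OL$. A separate argument will identify the center as the midpoint of $LO$.

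\textbf{Step 1: the relevant lines are antiparallel.} Let the parallel to $BC$ through $L$ meet $CA$ at $E_1$ and $AB$ at $F_1$. Define $F_2$ on $AB$ and $D_2$ on $BC$ from the parallel to $CA$, and $D_3$ on $BC$ and $E_3$ on $CA$ from the parallel to $AB$. The parallel sides $E_1F_1$, $F_2D_2$, $D_3E_3$ all pass through $L$; the sides $F_1F_2$, $D_2D_3$, $E_3E_1$ lie along the triangle's sides. So I do not take the hexagon in the order $E_1F_1F_2D_2D_3E_3$. Instead I use the crossed order $F_1E_3D_2F_2E_1D_3$ and check that $E_3F_1$, $F_2D_3$, $D_2E_1$ are antiparallel to $BC$, $CA$, $AB$.

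Consider $AF_1LE_3$. It is a parallelogram, since $LF_1\parallel AC$ fails but $LE_3\parallel AB$ and $LF_1\parallel BC$, so it is not one directly. Instead I use the standard fact that $L$ bisects every antiparallel to $BC$ cut off by angle $A$: the symmedian through $A$ is the median of the antiparallels.

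Let $M$ be the midpoint of $E_3F_1$. In triangle $AF_1E_3$, the segment $AL$ is a median of $AF_1E_3$ exactly when $L$ lies on the line through $A$ and $M$. The cleanest route is via distances. The line $LF_1$ is parallel to $BC$, and $LE_3$ is parallel to $AB$. Hence the triangle $LF_1E_3$-type similar triangles give $AF_1 = AB - BF_1$ with $BF_1/BA = d(L,BC)/h_a$, and similarly for $AE_3$. I expect the condition $AF_1\cdot AB = AE_3\cdot AC$, i.e.\ $B,C,F_1,E_3$ concyclic, to follow from the symmedian distance ratio $d(L,BC):d(L,CA):d(L,AB)=a:b:c$. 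This computation is the main obstacle, so I will do it carefully with areas.

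\textbf{Step 2: the other sides are parallel.} The remaining sides of the hexagon in the chosen order are segments through $L$ parallel to the sides, so the alternation required by Definition~\ref{tuck} holds. By Proposition~\ref{prop} the six points are concyclic, with center on $OL$.

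\textbf{Step 3: locating the center.} The chord $E_3F_1$ is antiparallel to $BC$, so it is parallel to the tangent at $A$ and perpendicular to $OA$. Its midpoint is $L$ (by the median property above, $L$ is the midpoint of the antiparallel through it). Therefore the perpendicular bisector of $E_3F_1$ is the line through $L$ parallel to $OA$, and likewise for the other two antiparallel chords.

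Now consider the chord $E_1F_1$ through $L$ parallel to $BC$. Its perpendicular bisector passes through the midpoint of $E_1F_1$ and is perpendicular to $BC$, so it is parallel to $OD$, where $D$ is the midpoint of $BC$. Combining this with the center lying on $OL$, I will show that the line through the midpoint of $E_1F_1$ perpendicular to $BC$ meets $OL$ at its midpoint. Equivalently, the projection of that midpoint onto $BC$ is the average of the projections of $L$ and $O$ onto $BC$. This reduces to a signed-length identity along $BC$, which again follows from the ratio $a:b:c$ computation.
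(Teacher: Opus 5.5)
Your overall plan (exhibit a Tucker hexagon and invoke Proposition~\ref{prop}) is the route the paper intends, but Step~1 pairs the wrong points, so its central claim is false. With $S=a^2+b^2+c^2$ and $L=(a^2:b^2:c^2)$ you get $AF_1=c(b^2+c^2)/S$ and $AE_3=bc^2/S$, which is exactly what your distance ratio gives. Then $AF_1\cdot AB=AE_3\cdot AC$ would force $c^4=0$. So $E_3F_1$ is never antiparallel to $BC$, and the deferred area computation cannot succeed. Your order $F_1E_3D_2F_2E_1D_3$ also does not alternate as Step~2 claims, since only $D_2F_2$ lies on a line through $L$.

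The correct partner of $E_3$ is $F_2$, the point cut on $AB$ by the parallel to $CA$. Then $AF_2LE_3$ really is a parallelogram, so the midpoint of $F_2E_3$ is the midpoint of $AL$ and lies on the $A$-symmedian. A segment joining line $AB$ to line $AC$ is antiparallel to $BC$ if and only if its midpoint lies on the $A$-symmedian: reflect, in the bisector of angle $A$, the fact that the median bisects exactly the segments parallel to $BC$. Hence $F_2E_3$ is antiparallel to $BC$. In the same way $F_1D_3$ (parallelogram $BF_1LD_3$) is antiparallel to $CA$, and $D_2E_1$ (parallelogram $CD_2LE_1$) is antiparallel to $AB$. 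So $F_2E_3D_3F_1E_1D_2$ is a Tucker hexagon in the sense of Definition~\ref{tuck}, with $B_a=F_2$, $C_a=E_3$, $C_b=D_3$, $A_b=F_1$, $A_c=E_1$, $B_c=D_2$.

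Step~3 rests on a false lemma. It is the symmedian \emph{line} that bisects every antiparallel to $BC$; $L$ is the midpoint only of the antiparallel through $L$, which belongs to the Second Lemoine Circle. None of the antiparallel chords here passes through $L$. If all three perpendicular bisectors did pass through $L$, your argument would put the center at $L$, contradicting the statement. Your fallback via the parallel chord $E_1F_1$ is only announced, never carried out.

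With the correct pairing no computation is needed. An antiparallel to $BC$ is parallel to the tangent at $A$, hence perpendicular to $OA$. So the perpendicular bisector of $F_2E_3$ is the line through the midpoint of $AL$ parallel to $AO$. This is a midline of triangle $ALO$, so it passes through the midpoint $N$ of $LO$. The analogous lines at $B$ and $C$ are parallel to $OB$ and $OC$, and these three directions are not all equal, so $N$ is their only common point. Once Proposition~\ref{prop} gives concyclicity, the center is therefore $N$.
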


\begin{figure}[h]
    \centering
    \includegraphics{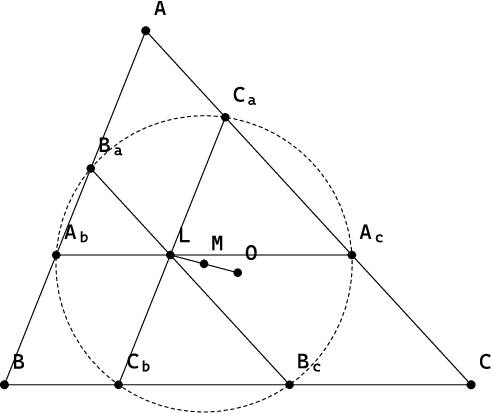}
    \caption{First Lemoine Circle}
    \label{fig:placeholder}
\end{figure}

\begin{proposition}
(Second Lemoine Circle) Let the lines through point $L$, which are antiparallel to the lines $BC$, $CA$, and $AB$, intersect lines $CA$, $AB$, and $BC$ in six points. These six points lie on a single circle, called the \emph{Second Lemoine Circle} of triangle $ABC$, and its center is point $L$.
\end{proposition}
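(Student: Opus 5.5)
The plan rests on one classical fact: $L$ bisects every antiparallel segment through it. Concretely, let the antiparallel to $BC$ through $L$ meet $CA$ and $AB$ at $P_a$ and $Q_a$, and define $P_b,Q_b$ and $P_c,Q_c$ in the same way for the antiparallels to $CA$ and $AB$. I will show that $L$ is the midpoint of each of $P_aQ_a$, $P_bQ_b$, $P_cQ_c$, and that these three segments have equal length. The six points then lie on the circle centered at $L$ with radius half that common length.

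\textbf{Step 1 (bisection).} The midpoint-bisection property is the defining characterization of the symmedian. An antiparallel to $BC$ with respect to $\angle A$ is the reflection, across the bisector of $\angle A$, of a line parallel to $BC$. The median from $A$ bisects every segment parallel to $BC$ with endpoints on $AB$ and $AC$. Reflecting across the angle bisector carries the median to the $A$-symmedian, so the $A$-symmedian bisects every antiparallel segment. The antiparallel $P_aQ_a$ passes through $L$, which lies on the $A$-symmedian, so its midpoint is the intersection of that segment with the symmedian, namely $L$. The same argument applies to the other two segments.

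\textbf{Step 2 (equal lengths).} This is the step where some care is needed. Since $P_aQ_a$ is antiparallel to $BC$, the angles of $\triangle AP_aQ_a$ at $P_a$ and $Q_a$ are $\angle B$ and $\angle C$.
\begin{itemize}
\item The angle of $\triangle LP_aC$ at $P_a$ is therefore $\pi-\angle B$.
\item Likewise, $P_bQ_b$ is antiparallel to $CA$ with respect to $\angle B$, with $P_b$ on $AB$ and $Q_b$ on $BC$. The angle of $\triangle LQ_bC$ at $Q_b$ is then $\angle B$ or its supplement, depending on the labeling.
\end{itemize}
From these angles I expect to show that $P_a$ and $Q_b$ both lie on $CA$ and $CB$ respectively, and that the segments $LP_a$ and $LQ_b$ make equal angles with $CL$. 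The cleaner route is to show directly that $P_a$ and $Q_b$ lie on a line parallel to $AB$: triangle $LP_aQ_b$ then has base angles equal to $\angle A$ and $\angle B$ after the chase, and comparing it with the isosceles structure gives $LP_a=LQ_b$.

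\textbf{Fallback for Step 2.} I expect the angle chase above to be the main obstacle. If it does not come together, I will use distances to the sides instead. The distances from $L$ to the sides are proportional to $a,b,c$. The half-length of the antiparallel to $BC$ through $L$ is $d(L,CA)/\sin B$ (from the right triangle with the foot on $CA$, where the angle at $P_a$ equals $B$). Hence
\[
LP_a=\frac{kb}{\sin B}=2Rk,
\]
where $k$ is the proportionality constant. By symmetry every half-segment equals $2Rk$, so the three half-lengths are all equal to $2Rk$. Together with Step 1, all six points are at distance $2Rk$ from $L$, so they lie on one circle centered at $L$.
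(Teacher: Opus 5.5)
The paper gives no proof of this proposition. It quotes it as classical, and the only method it indicates is the remark after Proposition~\ref{prop}: known Lemoine circles are Tucker circles, so one checks that the hexagon's sides are alternately parallel and antiparallel and then invokes that proposition. Your argument is a different route, and it is complete, but only through your fallback.

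The main-line chase in Step~2 does not work as written:
\begin{itemize}
\item The antiparallel to $CA$ through $L$ makes angle $A$, not $B$, with $BC$, so the angle of $\triangle LQ_bC$ at $Q_b$ is $\pi-\angle A$.
\item The parallelism $P_aQ_b\parallel AB$ is asserted without justification.
\item A triangle with base angles $\angle A$ and $\angle B$ is not isosceles unless $A=B$.
\end{itemize}
It is easily repaired. By Step~1 the segments $P_aQ_a$ and $P_bQ_b$ bisect each other at $L$, so $Q_aP_bP_aQ_b$ is a parallelogram. Hence $P_aQ_b\parallel Q_aP_b$, i.e.\ $P_aQ_b\parallel AB$. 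Then both base angles of $\triangle LP_aQ_b$ equal $\angle C$, which gives $LP_a=LQ_b$. Chaining this with Step~1 around the three sides makes all six distances equal. A shorter repair: the antiparallels to $BC$ and to $AB$ through $L$ meet $CA$ at equal angles $B$ of opposite sense, so together with $CA$ they bound an isosceles triangle with apex $L$.

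Your fallback is correct and in fact self-contained. Each antiparallel through $L$ meets the sides of lengths $a,b,c$ at angles $A,B,C$ respectively. Since $d(L,BC):d(L,CA):d(L,AB)=a:b:c$, each of the six distances is one of $ka/\sin A$, $kb/\sin B$, $kc/\sin C$. All three equal $2Rk$ by the law of sines, so Step~1 is not even needed.

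Compared with the Tucker route the paper points to, this single computation gives concyclicity, the center $L$, and the radius $2Rk=abc/(a^2+b^2+c^2)$ at once. The Tucker reduction, by contrast:
\begin{itemize}
\item would still need the parallelogram argument above to verify the parallel sides;
\item rests on Proposition~\ref{prop}, which the paper also leaves unproved;
\item by itself only places the center on $OL$, so identifying it as $L$ needs the bisection property of Step~1 anyway.
\end{itemize}
What the Tucker viewpoint offers instead is the placement of this circle within the Tucker family. That is the point the paper cares about when contrasting it with the non-Tucker circle of Theorem~\ref{new}.
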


\begin{figure}[h]
    \centering
    \includegraphics{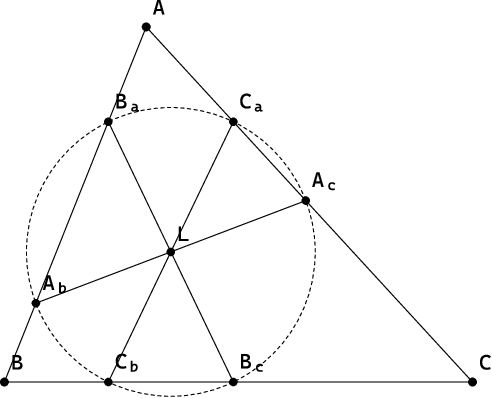}
    \caption{Second Lemoine Circle}
    \label{fig:placeholder}
\end{figure}

\begin{figure}[h]
    \centering
    \includegraphics{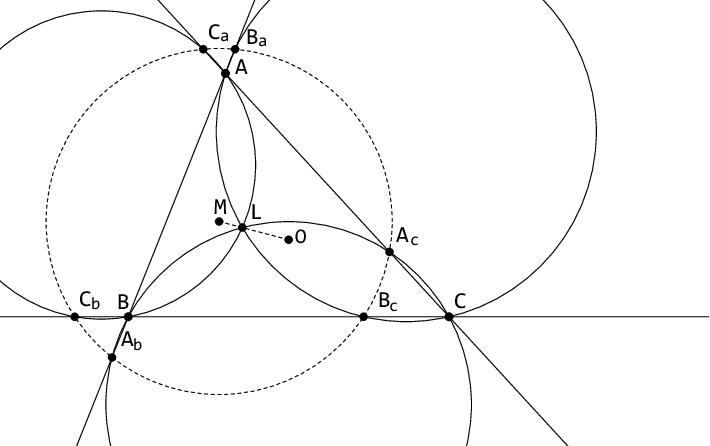}
    \caption{Third Lemoine Circle}
    \label{fig:placeholder}
\end{figure}

These two circles gave rise to the family of Lemoine circles. Their distinguishing feature is the fact that these six points lie on a single circle if and only if the initial point is the Lemoine point. Moreover, the center of each of these circles lies on the line $OL$, the Brocard axis of the triangle $ABC$. Consequently, mathematicians became interested in searching for circles with similar properties, obtaining two significant results.

\begin{proposition}
   (Third Lemoine Circle) Let the circumcircle of triangle $BLC$ intersect lines $AB$ and $AC$ at points $A_b$ and $A_c$, respectively (distinct from $B$ and $C$). Let the circumcircle of triangle $CLA$ intersect lines $BC$ and $BA$ at points $B_c$ and $B_a$, respectively (distinct from $C$ and $A$). Let the circumcircle of triangle $ALB$ intersect lines $CA$ and $CB$ at points $C_a$ and $C_b$, respectively (distinct from $A$ and $B$). Then, the points $A_b$, $A_c$, $B_a$, $B_c$, $C_a$, and $C_b$ lie on a single circle, and its center $M$ lies on line $OL$ and satisfies the relation $LM = -\frac{1}{2} LO$ (where the segments are directed).
\end{proposition}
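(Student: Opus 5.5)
We prove the statement by showing that $A_bA_cB_cB_aC_aC_b$ is a Tucker hexagon and then invoking Proposition~\ref{prop}. This gives concyclicity and places the center on $OL$. The ratio $LM=-\tfrac12 LO$ is then obtained by calibrating the position of a Tucker center against the First and Second Lemoine Circles.

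\emph{Antiparallel sides.} These are immediate. The points $B,C,A_c,A_b$ lie on the circumcircle of $BLC$, so $A_bA_c$ is antiparallel to $BC$ with respect to $\angle A$. In the same way $B_cB_a$ is antiparallel to $CA$ and $C_aC_b$ is antiparallel to $AB$.

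\emph{Parallel sides.} I would compute in barycentric coordinates. We use $L=(a^2:b^2:c^2)$ and set $S=a^2+b^2+c^2$. A circle through $B$ and $C$ has equation
\[
a^2yz+b^2zx+c^2xy-u\,x(x+y+z)=0 .
\]
Substituting $L$ gives $u=3b^2c^2/S$. Putting $y=0$ gives $b^2z=u(x+z)$, so $A_c$ has normalized $C$-coordinate $3c^2/S$, that is, $AA_c/AC=3c^2/S$. The symmetric computation for the circumcircle of $CLA$ on line $BC$ gives $BB_c/BC=3c^2/S$. Hence
\[
\frac{CA_c}{CA}=\frac{CB_c}{CB}=1-\frac{3c^2}{S},
\]
so $A_cB_c\parallel AB$. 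The other two parallelisms follow by cyclic permutation. The hexagon therefore satisfies Definition~\ref{tuck}, and Proposition~\ref{prop} yields the circle and $M\in OL$.

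\emph{Position of $M$.} I expect this step to be the main obstacle. The plan is to parametrize Tucker hexagons by the common ratio
\[
\lambda=\frac{AA_c}{AC}\cdot\frac{S}{c^2},
\]
which is cyclically symmetric. The claim to prove is that the Tucker center $T$ depends affinely on $\lambda$ along $OL$. To justify this, I would argue that the antiparallel sides of a Tucker hexagon are images of one another under homotheties centered at $L$, so varying the parameter moves the configuration, and with it the center, affinely. As a fallback, one can compute the center directly from the perpendicular bisector of $A_bA_c$ intersected with $OL$, which is linear in $\lambda$.

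We then calibrate with two known cases:
\begin{itemize}
\item For the First Lemoine Circle, the parallel to $AB$ through $L$ preserves the $C$-coordinate, so $\lambda=1$ and $T$ is the midpoint of $LO$.
\item For the Second Lemoine Circle, the center is $L$, and one checks that $\lambda=2$.
\end{itemize}
This check, the cosine-circle value, needs a short verification. Extrapolating linearly to $\lambda=3$ gives $LM=-\tfrac12 LO$.
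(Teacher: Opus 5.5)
\textbf{Concyclicity and the line $OL$.} This part follows exactly the route the paper indicates. The paper gives no proof of this proposition; it only remarks that for the known Lemoine circles it suffices to check that the sides are alternately antiparallel and parallel and then invoke Proposition~\ref{prop}. Your barycentric computation carries this out correctly: $u=3b^2c^2/S$ and $AA_c/AC=BB_c/BC=3c^2/S$.

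\textbf{The ratio: your primary argument fails.} The paper says nothing about $LM=-\frac12 LO$, so that part is your own, and your first justification does not hold. Consider the $A$-antiparallel side for parameter $\lambda$. Its endpoints are $A+\lambda\frac{b^2}{S}(B-A)$ and $A+\lambda\frac{c^2}{S}(C-A)$. So these sides are homothetic from the vertex $A$, not from $L$: their length scales like $\lambda$, while the distance of their midpoint from $L$ scales like $|2-\lambda|$. Within a single hexagon the three antiparallel sides have different directions, so that reading fails too. What is homothetic from $L$ is the triangle carrying the parallel sides, and that alone does not force the circumcenter of the six vertices to move affinely.

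\textbf{The fallback works and should be the actual argument.} It needs two facts you did not state.
\begin{enumerate}
\item The midpoint of $A_bA_c$ is $A+\frac{\lambda}{2}(L-A)=L+\bigl(1-\frac{\lambda}{2}\bigr)(A-L)$. This is immediate from the coordinates above, and at $\lambda=2$ it is precisely your cosine-circle check.
\item Since $A_bA_c$ is antiparallel to $BC$, it is parallel to the tangent to $\omega$ at $A$. Hence its perpendicular bisector is parallel to $AO$.
\end{enumerate}
So this bisector is the image of the line $AO$ under the homothety with center $L$ and ratio $1-\frac{\lambda}{2}$. It therefore meets $OL$ at $L+\bigl(1-\frac{\lambda}{2}\bigr)(O-L)$. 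You should pick a vertex with $L\notin AO$; only the equilateral case is excluded.

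For $\lambda=3$ this gives $LM=-\frac12 LO$ directly. Your calibration with the First and Second Lemoine circles (your values $\lambda=1,2$ are correct) then becomes a consistency check rather than a needed step. The same homothety also sends $BO$ and $CO$ to the other two perpendicular bisectors, so $M$ is located even without the $OL$ part of Proposition~\ref{prop}.
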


\begin{proposition}
    (Q.T.Bui Circle) Let $ABC$ be a triangle with circumcircle $\omega$. The circle $\omega_1$, passing through points $A$ and $L$ and tangent to $\omega$ at $A$, intersects lines $AB$ and $AC$ at points $A_b$ and $A_c$, respectively. The circle $\omega_2$, passing through points $B$ and $L$ and tangent to $\omega$ at $B$, intersects lines $BC$ and $BA$ at points $B_c$ and $B_a$, respectively. The circle $\omega_3$, passing through points $C$ and $L$ and tangent to $\omega$ at $C$, intersects sides $CA$ and $CB$ at points $C_a$ and $C_b$, respectively. Then, the points $A_b$, $A_c$, $B_a$, $B_c$, $C_a$, and $C_b$ lie on a single circle, and its center $M$ lies on line $OL$ and satisfies the relation $LM = \frac{1}{4} \cdot LO$.
\end{proposition}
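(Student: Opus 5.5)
The plan is to reduce the statement to Proposition~\ref{prop}: show that $B_aC_aA_cB_cC_bA_b$ (suitably ordered) is a Tucker hexagon, and then locate its center on $OL$ by comparing with the First and Second Lemoine Circles.

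\textbf{Step 1: the parallel sides.} Since $\omega_1$ is tangent to $\omega$ at $A$, the homothety $h_A$ with center $A$ mapping $\omega$ to $\omega_1$ sends $B\mapsto A_b$ and $C\mapsto A_c$. Hence $A_bA_c\parallel BC$, and likewise $B_cB_a\parallel CA$ and $C_aC_b\parallel AB$. Let $k_a$ be the ratio of $h_A$, so that $AA_b=k_a c$ and $AA_c=k_a b$, and define $k_b,k_c$ analogously. To compute $k_a$, let $P$ be the second intersection of the symmedian $AL$ with $\omega$. Then $h_A(P)=L$, so $k_a=AL/AP$ and $1-k_a=LP/AP$. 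This ratio can be computed from $AL/LD=(b^2+c^2)/a^2$, where $D$ is the foot of the symmedian on $BC$, together with the power of the point $D$, namely $AD\cdot DP=BD\cdot DC$ with $BD/DC=c^2/b^2$. Equivalently, one can use barycentrics $L=(a^2:b^2:c^2)$. I expect the result
\[
1-k_a=\frac{3a^2}{2(a^2+b^2+c^2)}
\]
and its cyclic analogues. As a sanity check, in the equilateral case $L=O$, $AP=2R$ and $k_a=\tfrac12$, which agrees.

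\textbf{Step 2: the antiparallel sides.} The side $A_cC_b$ joins a point of $CA$ to a point of $CB$. It is antiparallel to $AB$ with respect to the angle at $C$ iff $A,B,A_c,C_b$ are concyclic. By power of the point $C$ this is equivalent to $CA_c\cdot CA=CC_b\cdot CB$. Now $CA_c=(1-k_a)b$ and $CC_b=(1-k_b)a$, where $C_b=h_B(C)$ lies on $BC$, so $CC_b=(1-k_b)a$. The condition therefore becomes
\[
(1-k_a)\,b^2=(1-k_b)\,a^2 .
\]
By the formula from Step 1, both sides equal $3a^2b^2/(2(a^2+b^2+c^2))$, so the condition holds. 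The same argument applies cyclically. Hence the hexagon has sides alternately parallel and antiparallel to the sides of $ABC$, and Proposition~\ref{prop} gives concyclicity with center $M$ on $OL$. The main obstacle is the clean evaluation of $1-k_a$; if the synthetic route is messy, I will fall back on the barycentric computation of $AP$.

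\textbf{Step 3: locating the center.} The parallel sides of a Tucker hexagon are the images of the sides of $ABC$ under homotheties at the vertices. I will parametrize the family by the common value $t=(1-k_a)/a^2$. The center is the circumcenter of a configuration depending affinely on $t$, hence moves linearly along $OL$ as $t$ varies. It therefore suffices to calibrate with two known members of the family:
\begin{itemize}
\item The degenerate case $t=0$ (all $k=1$) gives the circumcircle, with center $O$.
\item The First Lemoine Circle has center at the midpoint of $LO$. Its parallels pass through $L$, and a line through $L=(a^2:b^2:c^2)$ parallel to $BC$ cuts off $1-k_a=a^2/(a^2+b^2+c^2)$. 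Thus it corresponds to $t=1/S$, where $S=a^2+b^2+c^2$.
\end{itemize}
Linearity then places the center for $t=\tfrac{3}{2S}$ at $O+\tfrac32\cdot\tfrac12(L-O)=O+\tfrac34(L-O)$. This gives $LM=\tfrac14 LO$, as claimed. The remaining point to justify is the linearity of the center in $t$. I would justify it by writing the center of the Tucker circle as the intersection of perpendicular bisectors of the parallel chords: each of these chords moves affinely in $t$, so each perpendicular bisector moves affinely, and so does their intersection.
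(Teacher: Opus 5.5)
Your proof is correct, and its concyclicity part is the route the paper itself indicates. The paper gives no proof of this proposition. It only remarks that every known Lemoine circle is a Tucker circle, so it suffices to check that the hexagon's sides are alternately parallel and antiparallel and then invoke Proposition~\ref{prop}. Your homothety computation carries this out. The value $1-k_a=\frac{3a^2}{2(a^2+b^2+c^2)}$ is right: Stewart's theorem gives $AD^2=\frac{b^2c^2(2b^2+2c^2-a^2)}{(b^2+c^2)^2}$, the power of $D$ gives $AP=\frac{2b^2c^2}{(b^2+c^2)\,AD}$, and so $k_a=\frac{2b^2+2c^2-a^2}{2(a^2+b^2+c^2)}$.

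There is one labeling slip. In the statement's notation $h_B(C)$ is $B_c$, not $C_b$. So the antiparallel you verify at $C$ is $A_cB_c$, and the hexagon in cyclic order is $A_bA_cB_cB_aC_aC_b$, with $B_aC_a$ and $C_bA_b$ the other two antiparallels. Your computation is unaffected; only the names change.

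For the ratio $LM=\frac14 LO$, the paper gives no argument. Proposition~\ref{prop} only puts the center on $OL$, and the ratio is quoted as a known result. Your Step~3 therefore supplies something the paper omits. The intersection of the perpendicular bisectors of $A_bA_c$ and $B_cB_a$ is affine in $t$, since both lines have fixed directions and pass through the midpoints $A+k_a(\tfrac{B+C}{2}-A)$ and its analogue. It equals $O$ at $t=0$ by definition of $O$, with no concyclicity needed there. At $t=1/(a^2+b^2+c^2)$ it equals the midpoint of $LO$, because that member is the First Lemoine circle. Interpolating gives $M=O+\frac34(L-O)$ at $t=\frac{3}{2(a^2+b^2+c^2)}$, which is sound.

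A bonus of your parametrization is that it places every circle of the family along $OL$ at once. For example, $t=2/(a^2+b^2+c^2)$ gives center $L$ and chords on the sides proportional to $\cos A,\cos B,\cos C$, i.e.\ the Second Lemoine circle.
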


\begin{figure}[h]
    \centering
    \includegraphics{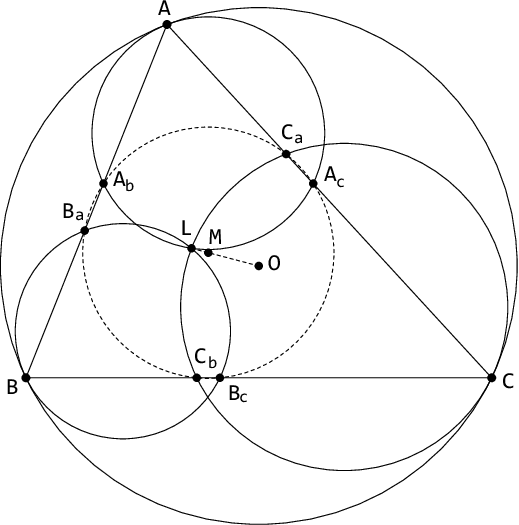}
    \caption{Q.T.Bui Circle}
    \label{fig:placeholder}
\end{figure}

\begin{remark}
    Once again, for these circles, the only initial point for which these points lie on a single circle is point $L$, and in that case, the centers of those circles also lie on the Brocard axis.
\end{remark}

\section{A New Lemoine-Type Construction}

Here, we present the main result: a new Lemoine-type circle construction that is closely related to the construction introduced by Q. T. Bui in 2006. Unlike the Lemoine circles considered above, the resulting circle does not belong to the family of Tucker circles. This distinction makes the proof considerably more complex.

\begin{theorem}\label{new}
    Let $ABC$ be a triangle with a circumcircle $\omega$. Let $A', B', C'$ denote the intersections of the lines $AL, BL, CL$ with the circumcircle $\omega$ (distinct from $A, B, C$). The circle $\omega_1$, passing through points $A'$ and $L$, tangent to $\omega$ at $A'$, intersects the line $BC$ at points $A_b$ and $A_c$. The circle $\omega_2$, passing through points $B'$ and $L$, tangent to $\omega$ at $B'$, intersects the line $CA$ at points $B_c$ and $B_a$. The circle $\omega_3$, passing through points $C'$ and $L$, tangent to $\omega$ at $C'$, intersects the line $AB$ at points $C_a$ and $C_b$. Then, the points $A_b$, $A_c$, $B_a$, $B_c$, $C_a$, $C_b$ lie on a single circle. This circle is not a Tucker circle, and its center $M$ lies on the line $OL$ and satisfies $LM = \frac{3}{4}\cdot LO$.
\end{theorem}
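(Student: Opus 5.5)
We will work with powers of points and barycentric coordinates, which avoids having to identify the intersection points themselves.

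\textbf{Step 1: a formula for the power with respect to $\omega_1$.} Since $A$, $L$, $A'$ are collinear and $\omega_1$ is tangent to $\omega$ at $A'$, the homothety with center $A'$ sending $\omega$ to $\omega_1$ maps $A$ to $L$. Its ratio is therefore
\[
k_a=\frac{A'L}{A'A},
\]
and the center of $\omega_1$ is $O_1=k_aO+(1-k_a)A'$. All circles tangent to $\omega$ at $A'$ form the coaxal pencil spanned by $\omega$ and the point-circle $A'$. Matching centers gives, for every point $P$,
\[
\operatorname{pow}(P,\omega_1)=k_a\,\operatorname{pow}(P,\omega)+(1-k_a)\,PA'^2 .
\]
The analogous formulas hold for $\omega_2$ and $\omega_3$ with $k_b$, $B'$ and $k_c$, $C'$.

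\textbf{Step 2: barycentric data.} In homogeneous barycentrics normalized by $x+y+z=1$, we have $\operatorname{pow}(P,\omega)=-(a^2yz+b^2zx+c^2xy)$. The point-circle $A'$ has power of the form
\[
PA'^2=-(a^2yz+b^2zx+c^2xy)+(\text{linear form in }x,y,z).
\]
Hence every circle has an equation $-\sum a^2yz+(x+y+z)(ux+vy+wz)=0$. We will use $L=(a^2:b^2:c^2)$ and $A'=(-a^2:2b^2:2c^2)$, obtained by intersecting the $A$-symmedian with $\omega$, together with its cyclic analogues. From these, $k_a$ is computed directly. Along line $AL$ the coordinates are affine in the parameter, so $k_a$ is a ratio of normalized coordinates; I expect something like $k_a=\tfrac{b^2+c^2-a^2}{\cdots}$, built from $S=a^2+b^2+c^2$. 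Combining with Step 1 gives explicit coefficients $(u_1,v_1,w_1)$ for $\omega_1$, and cyclically $(u_2,v_2,w_2)$ and $(u_3,v_3,w_3)$.

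\textbf{Step 3: concyclicity criterion.} Two circles in this normalized form cut the line $x=0$ in the same pair of points exactly when their $v$ and $w$ coefficients agree. So it suffices to find one circle $\Gamma$ with coefficients $(u,v,w)$ such that
\[
(v,w)=(v_1,w_1),\qquad (w,u)=(w_2,u_2),\qquad (u,v)=(u_3,v_3).
\]
This amounts to checking three consistency identities: $v_1=v_3$, $w_1=w_2$ and $u_2=u_3$. By cyclic symmetry only one of them needs verifying. I expect this verification to be the main computational obstacle. The place where the Lemoine point is genuinely used is that the choice $L=(a^2:b^2:c^2)$ makes the mixed terms coming from $(1-k_a)PA'^2$ and $(1-k_c)PC'^2$ coincide. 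I would first simplify by writing everything in terms of $S=a^2+b^2+c^2$ and the common denominators, and hope the identity factors visibly.

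\textbf{Step 4: center, and the non-Tucker claim.} The center of $\Gamma$ is read off from its coefficients. We then check that it equals $\tfrac14L+\tfrac34O$ after normalization, which gives $LM=\tfrac34 LO$. For the non-Tucker claim, we show that a side of the hexagon, say $A_cB_a$, is in general neither parallel nor antiparallel to the corresponding side of $ABC$. By Proposition~\ref{prop}, a Tucker circle's hexagon must have this alternating property, and the hexagon determined by the six points on $\Gamma$ is fixed. It is enough to exhibit one explicit triangle, for example a right or $3$-$4$-$5$ triangle, where this property fails.
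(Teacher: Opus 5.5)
Your plan is correct and takes a genuinely different, computational route.

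\textbf{How the paper argues.} The paper works synthetically. The pole $X$ of $B'C'$ and the point $L$ have equal powers with respect to $\omega_2$ and $\omega_3$. Since $L$ is also the symmedian point of $A'B'C'$ (Lemma~\ref{lem42}), the radical axis $LX$ is the line $AA'$. Hence $A$ has equal powers, and the three resulting cyclic quadrilaterals are merged by a radical-center contradiction. The center is obtained from the parallelogram $LO_1OO_1'$ by point-reflecting the Q.T. Bui center $M_4$ in the midpoint of $LO$; the position $LM_4=\frac14 LO$ is quoted, not proved.

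\textbf{What you do differently.} Your identities $v_1=v_3$ etc.\ say exactly that each vertex has equal power with respect to the two relevant circles. You obtain them by computation and then write down the common circle directly, with no radical-axis argument. The computation is lighter than you expect. With $S=a^2+b^2+c^2$ one has $k_a=\frac{3a^2}{2S}$ (not your guessed form) and $BA'^2=\frac{a^2c^2}{2b^2+2c^2-a^2}$. Hence $v_1=\mathrm{pow}(B,\omega_1)=(1-k_a)BA'^2=\frac{a^2c^2}{2S}$, visibly equal to $v_3$. Using $AL^2=\frac{b^2c^2(2b^2+2c^2-a^2)}{S^2}$, you get $(u,v,w)=\frac{1}{2S}(b^2c^2,c^2a^2,a^2b^2)=\frac14(AL^2,BL^2,CL^2)+\frac{3a^2b^2c^2}{4S^2}(1,1,1)$. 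That is, $\mathrm{pow}(P,\Gamma)=\frac34\mathrm{pow}(P,\omega)+\frac14PL^2+\mathrm{const}$, so the center is $\frac34O+\frac14L$.

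\textbf{What each route buys.} Yours is self-contained: it needs neither the lemma on $A'B'C'$ nor the Bui circle, and it also gives the radius, $r^2=\frac{9}{16}R^2-\frac{3a^2b^2c^2}{16S^2}$. The paper's route explains why $L$ is the right point, and the same radical-axis picture yields the converse theorem almost for free.

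\textbf{The gap: the non-Tucker step.} The paper gives no argument here at all. Your version has a hole. The construction does not distinguish $A_b$ from $A_c$, or $B_a$ from $B_c$. So showing that the single chord $A_cB_a$ is neither parallel nor antiparallel to $AB$ does not exclude that another of the four chords joining $\Gamma\cap BC$ to $\Gamma\cap CA$ is parallel to $AB$. A Tucker hexagon inscribed in $\Gamma$ must use these six points, but with a pairing you do not control. Also, the alternation is the definition of a Tucker hexagon, not Proposition~\ref{prop}.

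\textbf{How to fix it.} Your coordinates repair this without any numerical example. The points of $\Gamma$ on $CA$ and on $CB$ are $C+t(A-C)$ and $C+t(B-C)$, where $t$ is a root of $2St^2-(2S+a^2-c^2)t+a^2=0$, respectively $2St^2-(2S+b^2-c^2)t+b^2=0$. A chord parallel to $AB$ requires a common root. Subtracting the two equations gives $(a^2-b^2)(1-t)=0$, and $t=1$ is not a root (it leaves $c^2$). So for $a\ne b$ no such chord exists. Every Tucker hexagon has, at each vertex, a chord parallel to the opposite side. Therefore $\Gamma$ is not a Tucker circle for any non-equilateral triangle. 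For an equilateral triangle, $\Gamma$ is concentric with $\omega$ and is a Tucker circle. So the claim can only hold generically, which matches your one-example reading.
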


    \begin{figure}[h]
        \centering
        \includegraphics{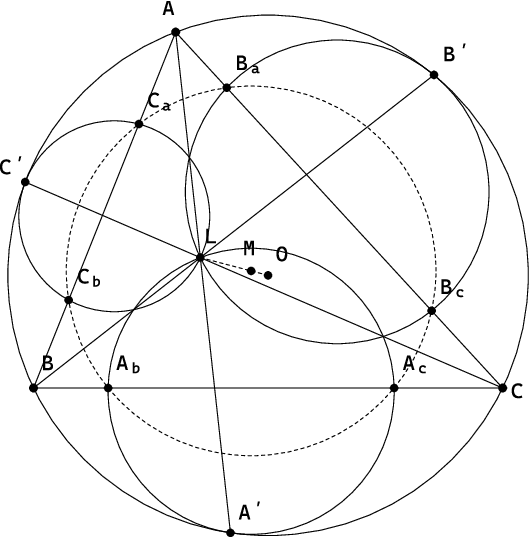}
        \caption{Theorem~\ref{new}.}
        \label{fig:placeholder}
    \end{figure}
    
\begin{proof}
    We will use the following lemma, which appears as Exercise 7.7 in Honsberger's book \cite[p.~77]{history}:
        \begin{lemma}\label{lem42}
            Point $L$ is the Lemoine point in triangle $A'B'C'$. 
        \end{lemma}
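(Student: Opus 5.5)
The plan is to reduce everything to the standard characterization of symmedians by harmonic quadrilaterals. \emph{Fact:} in a triangle $XYZ$ inscribed in a circle, a line through $X$ meeting the circle again at $W$ is the $X$-symmedian if and only if $XY\cdot ZW = XZ\cdot YW$. This follows from the ratio lemma. The symmedian is characterized by $\sin\angle YXW/\sin\angle WXZ = XY/XZ$, and by the law of sines in the circle, $YW/WZ = \sin\angle YXW/\sin\angle WXZ$. So we need to prove the following. Since $L$ lies on $A'A$, $B'B$ and $C'C$, it suffices to show that $A'A$ is the $A'$-symmedian of $A'B'C'$, i.e.\ $A'B'\cdot AC' = A'C'\cdot AB'$, and similarly at $B'$ and $C'$. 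By hypothesis, $AA'$ is the $A$-symmedian of $ABC$, so the Fact gives $AB\cdot CA' = AC\cdot BA'$.

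Next I express the four chords $A'B'$, $A'C'$, $AC'$, $AB'$ through chords of $ABC$ using similar triangles formed by the three chords $AA'$, $BB'$, $CC'$ concurrent at $L$ (intersecting chords theorem).
\begin{itemize}
\item From $\triangle LAB\sim\triangle LB'A'$ we get $A'B' = AB\cdot LA'/LB$.
\item From $\triangle LAC\sim\triangle LC'A'$ we get $A'C' = AC\cdot LA'/LC$.
\item From $\triangle LAC'\sim\triangle LCA'$ we get $AC' = CA'\cdot LA/LC$.
\item From $\triangle LAB'\sim\triangle LBA'$ we get $AB' = BA'\cdot LA/LB$.
\end{itemize}
Each similarity comes from vertical angles at $L$ together with inscribed angles on the same arc. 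The orientation of the correspondence should be checked carefully, since $L$ is interior.

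Substituting gives
\[
A'B'\cdot AC' = \frac{AB\cdot CA'\cdot LA\cdot LA'}{LB\cdot LC},\qquad
A'C'\cdot AB' = \frac{AC\cdot BA'\cdot LA\cdot LA'}{LB\cdot LC}.
\]
These are equal exactly because $AB\cdot CA' = AC\cdot BA'$. So $A'L$ is the $A'$-symmedian of $A'B'C'$, and cyclically $B'L$ and $C'L$ are the other two symmedians. Hence $L$ is the Lemoine point of $A'B'C'$.

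The main obstacle is bookkeeping rather than ideas. One must match the vertices in each similarity correctly so that the right ratios appear. One must also state the harmonic-quadrilateral Fact with the correct pairing of sides, namely adjacent chords from $X$ against the chords from $W$ to the opposite vertices. I would verify the Fact first, via the ratio lemma and the law of sines, then the four similarities, and the final cancellation is immediate.
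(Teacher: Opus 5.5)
Your proof is correct. Each of the four similarities is SAS at $L$: the vertical angle at $L$ together with $LA\cdot LA'=LB\cdot LB'=LC\cdot LC'$. The correspondences you list are the right ones, and the cancellation goes through.

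The paper itself gives no argument for this lemma; it only cites Exercise 7.7 in Honsberger. Your proposal therefore supplies a self-contained proof where the paper relies on a reference.

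Your computation also proves more than the lemma. For any point $L$ inside $\omega$ it gives
\[
\frac{A'B'\cdot AC'}{A'C'\cdot AB'}=\frac{AB\cdot CA'}{AC\cdot BA'}.
\]
This says that the map of $\omega$ sending a point to the second intersection of its line through $L$ preserves cross-ratios. So it carries the harmonic quadrilateral $ABA'C$ to the harmonic quadrilateral $A'B'AC'$. Because this map is an involution exchanging $ABC$ and $A'B'C'$, it also makes transparent the reverse use of the lemma in the proof of Theorem~\ref{conv}.

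A shorter route needs only one similarity per side. $\triangle LBC\sim\triangle LC'B'$ gives $d(L,B'C')/B'C'=d(L,BC)/BC$, and likewise for the other sides. So the characterization of the symmedian point by distances proportional to the sides passes directly from $ABC$ to $A'B'C'$. This uses that $L$ is inside $A'B'C'$, which holds because the six points alternate $A,C',B,A',C,B'$ on $\omega$.

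One detail to make explicit in your write-up is the ``if'' direction of your Fact. The ratio of sines determines the line through $A'$ only because $\theta\mapsto\sin\theta/\sin(\alpha-\theta)$ is strictly increasing on $(0,\alpha)$. It also needs $A$ to lie on the arc $B'C'$ not containing $A'$, which again follows from the alternation above.
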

        
    Let us denote by $X$ the point of intersection of the tangents at points $B'$ and $C'$ to the circle $\omega$. Then $XB' = XC'$, and consequently, point $X$ lies on the radical axis of circles $\omega_2$ and $\omega_3$. Since point $L$ also lies on the radical axis of these two circles, the line $LX$ is the radical axis of circles $\omega_2$ and $\omega_3$, and thus point $A$ also lies on this axis (because $A', L, A, X$ are collinear by the definition of the symmedian). Therefore, from the power of a point, we get that
    \begin{equation*}
        AB_a \cdot AB_c = AC_a \cdot AC_b,
    \end{equation*} 
    so the quadrilateral $B_aB_cC_bC_a$ is cyclic. Similarly, we obtain that the quadrilaterals $B_aB_cA_cA_b$ and $A_cA_bC_bC_a$ are cyclic. If any two of the three proven cyclic quadrilaterals are concentric, we immediately conclude that the points $A_b$, $A_c$, $B_c$, $B_a$, $C_a$, $C_b$ lie on one circle. Let us assume, therefore, that they have pairwise different centers. Then the radical axes of these circles are the corresponding sides of triangle $ABC$, which obviously do not intersect at a single point and are not pairwise parallel, a contradiction with the theorem on radical axes. Therefore, the points $A_b$, $A_c$, $B_a$, $B_c$, $C_a$, $C_b$ lie on one circle.

    \begin{figure}[h]
        \centering
        \includegraphics{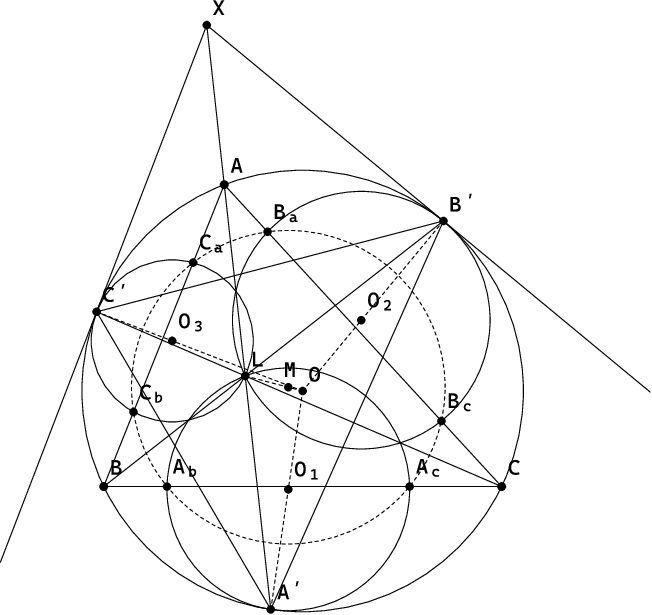}
        \caption{Points $A_b$, $A_c$, $B_a$, $B_c$, $C_a$, $C_b$ are concyclic.}
        \label{fig:placeholder}
    \end{figure}
    
We will now show that point $M$ lies on $LO$ and satisfies 
\begin{equation*}
    LM = \frac{3}{4}\cdot LO.
\end{equation*}

We will prove that the perpendicular bisector of $A_bA_c$ intersects the line $LO$ at a point $M'$ with the ratio stated above. Let $\omega_a$ be the circle with center $O_1'$ passing through $A$ and $L$ and tangent to $\omega$. Let $A_b'$ and $A_c'$ be the intersection points of $\omega_a$ with the lines $AB$ and $AC$, respectively (see Fig\ref{fig9}).

\begin{figure}[h]
        \centering
        \includegraphics{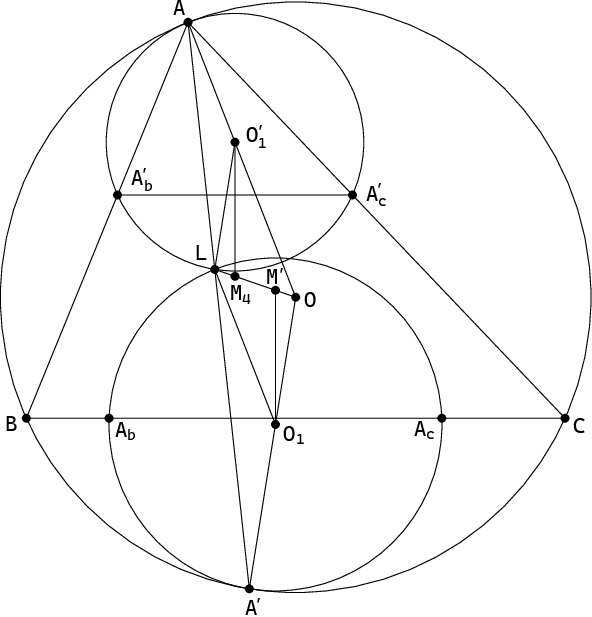}
        \caption{$LM_4= M'O$}
        \label{fig9}
    \end{figure}
\end{proof}

The circle $\omega_a$ comes from the construction of the Q.T. Bui circle. Hence, $A_b'A_c' \parallel BC$. Let $M_4$ be the center of the Q.T. Bui circle. Then
\begin{equation*}
LM_4=\frac{1}{4}LO.
\end{equation*}
Moreover, by the definition of the point $M'$, we have $M'O_1 \perp BC$. The line $O_1'M_4$ is the perpendicular bisector of $A_b'A_c'$. Therefore,
\begin{equation*}
(M_4O_1' \perp A_b'A_c' \parallel BC \perp M'O_1) \implies M_4O_1' \perp M'O_1.
\end{equation*}
Consider the homothety that maps the circle $\omega'$ to the circle $\omega$. It follows that $LO_1' \parallel OO_1$. Similarly, we can prove that $LO_1 \parallel O_1'O$. Hence, the quadrilateral $LO_1OO_1'$ is a parallelogram. Since $O_1'M_4 \parallel O_1M'$, by symmetry with respect to the midpoint of $LO$ we obtain $LM_4=M'O$. Thus,
\begin{equation*}
LM' = \frac{3}{4}\cdot LO.
\end{equation*}
Similarly, the perpendicular bisectors of $B_aB_c$ and $C_aC_b$ meet the line $LO$ at the point $M'$ with the same ratio. Therefore, $M=M'$, which proves that $M$ lies on the line $OL$ and satisfies the desired ratio.

\section{Converse Theorem} Here, we complete our analysis by presenting the converse theorem of our main result.

\begin{figure}[h]
    \centering
  \includegraphics{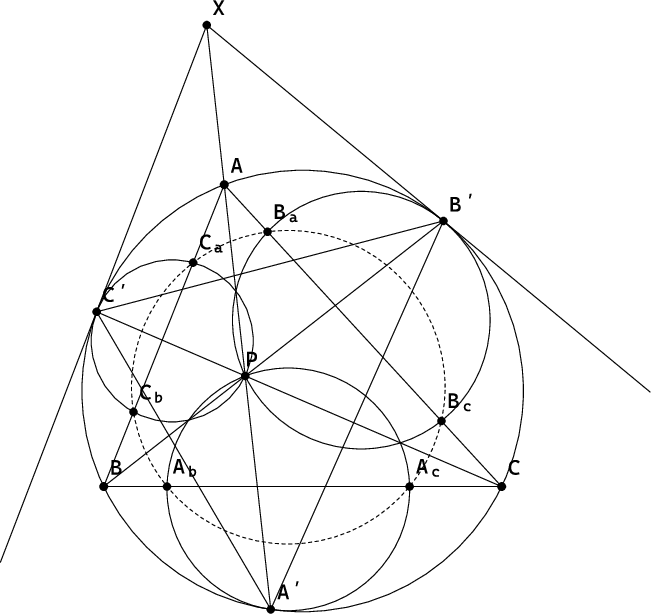}
    \caption{Proof of Theorem~\ref{conv}.}        \label{fig:placeholder}
\end{figure}

\begin{theorem}\label{conv}
    Let $P$ be any point inside triangle $ABC$ with a circumcircle $\omega$. Let $A',B',C'$ denote the intersections of the lines $AP, BP, CP$ with the circle $\omega$ (distinct from $A,B,C$). The circle $\omega_1$ passing through points $A'$ and $P$, tangent to $\omega$ at point $A'$, intersects side $BC$ at points $A_b$ and $A_c$. The circle $\omega_2$ passing through points $B'$ and $P$, tangent to $\omega$ at point $B'$, intersects side $CA$ at points $B_c$ and $B_a$. The circle $\omega_3$ passing through points $C'$ and $P$, tangent to $\omega$ at point $C'$, intersects side $AB$ at points $C_a$ and $C_b$. Then, if the points $A_b$, $A_c$, $B_c$, $B_a$, $C_a$, and $C_b$ lie on one circle, point $P$ is the Lemoine point of triangle $ABC$.
\end{theorem}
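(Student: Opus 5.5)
The plan is to run the radical-axis argument from the proof of Theorem~\ref{new} backwards. That argument turns the concyclicity into a symmedian condition for the triangle $A'B'C'$. Lemma~\ref{lem42}, applied to $A'B'C'$ instead of $ABC$, then takes us back to $ABC$.

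\textbf{Step 1: $A$ lies on the radical axis of $\omega_2$ and $\omega_3$.} Suppose the six points lie on one circle $\Gamma$. Compute the power of $A$ with respect to $\Gamma$ in two ways, using signed products. Along line $CA$ it equals $AB_a\cdot AB_c$, which is also the power of $A$ with respect to $\omega_2$. Along line $AB$ it equals $AC_a\cdot AC_b$, which is the power of $A$ with respect to $\omega_3$. Hence $A$ has equal power with respect to $\omega_2$ and $\omega_3$.

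\textbf{Step 2: identify that radical axis.} The circles $\omega_2$ and $\omega_3$ are distinct, since they are tangent to $\omega$ at the distinct points $B'\neq C'$. So their radical axis is a well-defined line.
\begin{itemize}
\item It contains $P$, a common point of the two circles.
\item $P$ is inside the triangle, so $P\neq A$, and the axis is exactly the line $AP$, i.e. the line $AA'$.
\item Let $X$ be the intersection of the tangents to $\omega$ at $B'$ and $C'$. The line $XB'$ is tangent to $\omega_2$ and $XC'$ is tangent to $\omega_3$. So the powers of $X$ are $XB'^2=XC'^2$, and $X$ lies on the radical axis.
\end{itemize}
Therefore $A'$, $P$, $X$ are collinear. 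In the degenerate case where $B'C'$ is a diameter, $X$ is the point at infinity in the direction of the parallel tangents. Then the radical axis, being perpendicular to the line of centers, is parallel to those tangents, and the same conclusion holds with $X$ at infinity.

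\textbf{Step 3: $P$ is the Lemoine point of $A'B'C'$.} The line from $A'$ through the pole $X$ of $B'C'$ with respect to $\omega$ is the $A'$-symmedian of triangle $A'B'C'$. This is the standard characterization used in the proof of Theorem~\ref{new}. Hence $A'P$ is the $A'$-symmedian of $A'B'C'$. By the same argument at $B$ and at $C$, using the powers of $B$ and $C$ with respect to $\Gamma$, the lines $B'P$ and $C'P$ are the other two symmedians. So $P$ is the Lemoine point of $A'B'C'$.

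\textbf{Step 4: transfer back to $ABC$ via Lemma~\ref{lem42}.} Apply Lemma~\ref{lem42} to the triangle $A'B'C'$, which is inscribed in the same circle $\omega$ and has Lemoine point $P$. The lines $A'P$, $B'P$, $C'P$ meet $\omega$ again at $A$, $B$, $C$. The lemma therefore says that $P$ is the Lemoine point of $ABC$, as required.

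\textbf{Expected obstacle.} The substantive idea is only Step 4, recognizing that the correspondence $ABC\leftrightarrow A'B'C'$ through $P$ is symmetric, so the lemma can be applied in reverse. The remaining care is bookkeeping:
\begin{itemize}
\item use signed powers, so it does not matter whether the intersection points lie on the segments;
\item treat the case where $X$ is at infinity;
\item note that all of $A',B',C'$ are distinct from one another and from $P$, because $P$ is interior.
\end{itemize}
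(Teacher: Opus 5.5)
Your proposal is correct and follows the paper's proof step for step. Equal powers of $A$ put $A$, $P$, $A'$ and the pole $X$ of $B'C'$ on the radical axis of $\omega_2,\omega_3$, so $P$ is the Lemoine point of $A'B'C'$, and Lemma~\ref{lem42} applied to $A'B'C'$ (which is what the paper's ``applied to triangle $A,B,C$'' must mean) returns to $ABC$, while your extra care with signed powers, the distinctness of $\omega_2,\omega_3$, and the case of $X$ at infinity only tightens the same argument.
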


\begin{proof}
    We will use the same notation as in the proof of Theorem \ref{new}. Let $P$ be such a point that the points $A_b$, $A_c$, $B_c$, $B_a$, $C_a$, and $C_b$ lie on one circle. Then, by the power of a point, we get that: $AB_a \cdot AB_c = AC_a \cdot AC_b$, so $A$ lies on the radical axis of the circles $\omega_2$ and $\omega_3$. Thus, $AP$ is the radical axis of these circles, and in particular, points $A'$ and $X$ lie on their radical axis. Therefore, the points $X$, $A$, $P$, and $A'$ are collinear (they lie on the radical axis of $\omega_2$ and $\omega_3$). Hence, point $P$ lies on the symmedian drawn from vertex $A'$ in triangle $A'B'C'$ (directly from the construction of the symmedian in a triangle). Similarly, we conclude that point $P$ lies on the symmedian drawn from vertex $B'$ and the symmedian drawn from vertex $C'$ in triangle $A'B'C'$. Therefore, point $P$ is the Lemoine point of triangle $A'B'C'$. From the Lemma~\ref{lem42}, applied to triangle $A$, $B$, $C$, we conclude that point $P$ is also the Lemoine point of triangle $ABC$.
\end{proof}

\section{Lemoine Circle Centers Along the Brocard Axis}

\begin{figure}[h]
        \centering
        \includegraphics{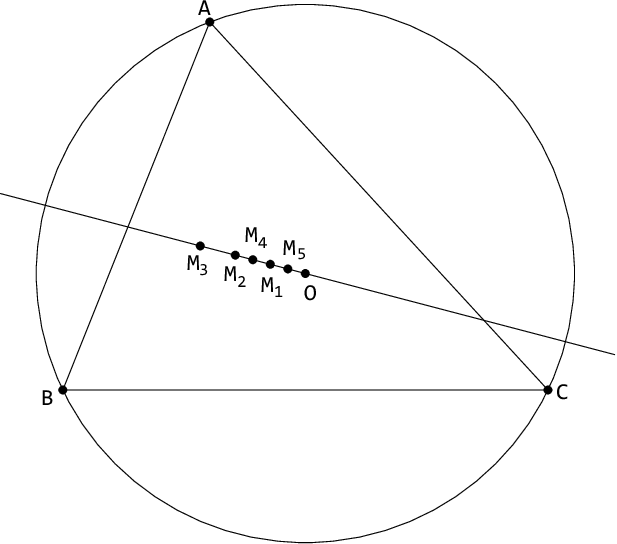}
        \caption{Lemoine Circle Centers Along the Brocard Axis}
        \label{fig:placeholder}
\end{figure}
It is worth noting how the centers of all these circles divide the line $OL$. Here $M_1$ is the center of the \emph{First Lemoine Circle}, $M_2$ is the center of the \emph{Second Lemoine Circle}, $M_3$ is the center of the Third Lemoine Circle, $M_4$ is the center of Q.T. Bui's Circle, and $M_5$ is the center of our Lemoine Circle. To achieve full symmetry, only a Lemoine Circle centered at the midpoint of segment $M_2M_3$ is missing. No such Lemoine Circle has been discovered yet, leaving room for further research.

% ------------------------------------------------------------------------
\end{document}